\def\({\left(}
\def\){\right)}
\def\le{\leqslant}
\def\ge{\geqslant}
\def\leq{\leqslant}
\def\geq{\geqslant}
\def\orbitset{\mathscr{O}}
\def\orbitnumber{\mathsf{O}}
\def\fixset{\mathscr{F}}
\def\fixnumber{\mathsf{F}}
\def\Z{\mathbb{Z}}
\def\ord{\operatorname{ord}}
\let\nvert\nmid
\newtheorem{theorem}{Theorem}
\newtheorem{lemma}[theorem]{Lemma}
\newtheorem{corollary}[theorem]{Corollary}
\newtheorem{proposition}[theorem]{Proposition}
\theoremstyle{definition}
\newtheorem{example}[theorem]{Example}
\newtheorem{remark}[theorem]{Remark}
\begin{document}

\title{Halving dynamical systems}
\author{Shaun Stevens, Tom Ward,
and Stefanie Zegowitz}

\maketitle

%

\section{Introduction.}

Halving (or doubling) a number is an
innocuous operation.
Unarguably it changes the size of a number, though
for a rational number this change in the real size
is exactly compensated by a change in its size
as a~$2$-adic number. For a real number all the more
interesting properties --- being rational, algebraic,
transcendent, well- or badly-approximable --- are unchanged.
Doubling (or halving) other objects also makes sense, but
with potentially much greater impact on non-trivial properties.

For example, doubling a finite group~$G$ may be
thought of
as forming another group~$H$ for which
the sequence of groups and homomorphisms
\[
\{1\}\longrightarrow
C_2
\longrightarrow
H
\longrightarrow
G
\longrightarrow\{1\}
\]
is exact, where~$C_2$ is the group with two elements.
Once again this has a simple effect on the size, as~$\vert H\vert=2\vert G\vert$.
However, the structure of the group clearly can change,
and the property of being abelian or simple
is not generally preserved.
Halving only makes sense if~$\vert G\vert$ is even, in
which case it might be thought of as follows.
If~$\vert G\vert$ is even then~$G$ contains at least
one copy of~$C_2$, and halving means forming the
quotient space~$G/C_2$. This operation does not
even preserve the property of being a group,
unless the copy of~$C_2$ happens to be a normal
subgroup.

Our purpose here is to discuss doubling and
halving in the context of \emph{topological dynamical systems},
by which we mean pairs~$(X,T)$,
where~$X$ is a compact metric space and~$T:X\to X$
is a homeomorphism
with a fixed point
and with finitely many points of period~$n$
for each~$n\ge1$.
Halving (and doubling) may be thought of as
a relationship between pairs~$(X,T)$ and~$(\overline{X},\overline{T})$
of the following form. 

Suppose there is
a continuous involution~$\imath:X\to X$
that commutes with~$T$, and use
this to define an equivalence relation on~$X$
by saying that~$x\sim y$ if and only if~$x=\imath(y)$.
Writing~$[x]_{\mathord\sim}$ for the
equivalence class~$\{x,\imath(x)\}$ of~$x$
we define~$\overline{X}$ to be the quotient
space~$X/\mathord\sim$ and~$\overline{T}$ to be
the map defined by~$\overline{T}\([x]_{\sim}\)=[T(x)]_{\sim}$.
The process of passing from~$(X,T)$ to~$(\overline{X},\overline{T})$
may be thought of as halving, and passing from~$(\overline{X},\overline{T})$
to~$(X,T)$ as doubling.

The question we address is to ask which structures
are preserved and which are not by doubling and halving
in this sense.
The most important quantity associated to a topological
dynamical system is the topological entropy, 
which will not be of interest here as it is preserved
by halving (see~\cite[Ch.~5]{elw} for the
definition and details).
We are particularly interested
in the relationship between closed orbits in the
two systems. It turns out that there are some constraints on the
relationship between the numbers of closed orbits for~$T$ and for~$\overline{T}$
but that, within these constraints, essentially everything is possible.
The constraints will be described in Lemma~\ref{bounds}, and the
freedom within those constraints in Corollary~\ref{theorem}.

\section{Closed orbits}

We begin with some notational conventions for a map~$T:X\to X$. For~$n$ a natural number, we write
\[
\fixset_T(n)=\{x\in X\mid T^n(x)=x\}
\]
for the set of points of period~$n$ under
iteration of~$T$,
and~$\fixnumber_T(n)=\vert\fixset_T(n)\vert$
for the number of points of period~$n$.
Similarly, write~$\orbitset_T(n)$ for the
set of closed orbits of length~$n$ under iteration
of~$T$, and~$\orbitnumber_T(n)=\vert\orbitset_T(n)\vert$
for the number of closed orbits of length~$n$.
The set of points of period~$n$ comprises exactly
the disjoint union of those points on a closed
orbit of length~$d$
for each~$d$ dividing~$n$, and each such
orbit consists of~$d$ distinct points.
Thus
\begin{equation}\label{fixpointsfromorbits}
\fixnumber_T(n)=\sum_{d\vert n}d\orbitnumber_T(d),
\end{equation}
and hence, by M{\"o}bius inversion,
\[
\orbitnumber_T(n)=
\textstyle\frac1n
\displaystyle\sum\limits_{d\vert n}
\mu\(\textstyle\frac{n}{d}\)
\fixnumber_T(d)
\]
for any~$n\ge1$.

A convenient generating function for
the periodic point data is the
dynamical zeta function
\[
\zeta_T(z)=\exp\(\sum_{n\ge1}
\fixnumber_T(n)\frac{z^n}{n}\)
=
\prod_{n\ge1}\(1-z^{n}\)^{-\orbitnumber_T(n)},
\]
(the second equality is equivalent to the identity~\eqref{fixpointsfromorbits})
which defines a function under the
assumption that~$\fixnumber_T(n)<\infty$ for all~$n\ge1$,
in which case it has radius
of convergence given by
\[
\(\limsup_{n\to\infty}\(\fixnumber_T(n)\)^{1/n}\)^{-1}.
\]

\section{Closed orbits and topological factors}

The halving process of the introduction is
a special case of forming a
\emph{topological factor}. If~$(X,T)$ and~$(Y,S)$ are
topological
dynamical systems with a continuous surjective
map~$\pi:X\to Y$
satisfying~$\pi\circ T=S\circ\pi$,
then~$(Y,S)$ is called a topological
factor of~$(X,T)$. In general, closed orbits can behave very badly under a
topological factor map, as the following examples illustrate.

\begin{example}\label{factorwithfewclosedorbits}
Closed orbits in~$(X,T)$ may be glued together
in a topological factor, as illustrated in
Figure~\ref{figurefactor}(a). An extreme instance
of this is to take~$Y=\{y\}$ to be a singleton,
set~$S(y)=y$,
and define the factor map~$\pi$ by~$\pi(x)=y$
for all~$x\in X$. Then, whatever the
sequence~$\(\orbitnumber_T(n)\)$ of numbers
of closed orbits under~$T$, the factor
system has~$\orbitnumber_S(1)=1$ and~$\orbitnumber_S(n)
=0$ for all~$n\ge2$.
\end{example}

\begin{example}\label{factorwithmanyclosedorbits}
Finite pieces of orbits in~$(X,T)$ that are
not closed may close up under the factor map,
producing closed orbits in~$(Y,S)$, as illustrated
in Figure~\ref{figurefactor}(b). An extreme instance
of this is once again to take any topological
dynamical system~$(Y,S)$ and form the
product system~$X=Y\times\mathbb T$, where~$\mathbb T=\mathbb R/\mathbb Z$ is the additive circle, with
map~$T(y,t)=(S(y),t+\alpha\pmod{1})$ for
some fixed~$\alpha\notin\mathbb Q$. Then~$(Y,S)$ is a topological factor of~$(X,T)$ via the map
\begin{eqnarray*}
\pi:X&\longrightarrow& Y\\
(y,t)&\longmapsto& y.
\end{eqnarray*}
Then, whatever the
sequence~$\(\orbitnumber_S(n)\)$ of numbers
of closed orbits under~$S$, the map~$T$ has no orbits of finite length.
\end{example}

\begin{figure}[htbp]\begin{center}
\psfrag{A}{$\pi$}\psfrag{B}{$\pi$}
\psfrag{a}{(a)}\psfrag{b}{(b)}
\scalebox{1}[1]{\includegraphics{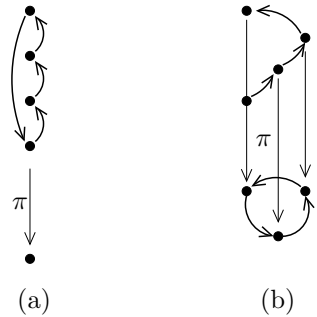}}
\caption{\label{figurefactor} Closed orbits squashed to a point and
created under a factor map.}
\end{center}
\end{figure}

Thus we cannot expect to be able to make
statements about numbers of closed orbits in
factor systems in general.
For quotienting by an action of~$C_2$ (that is, halving),
the situation is more restricted.

\begin{example}\label{circledoublingtotentmap}
Let~$X=\mathbb T=\mathbb R/\mathbb Z$ be the additive
circle, and define the map~$T$ on~$X$ by~$T(x)=2x \pmod{1}$.
This is not a homeomorphism, but is
a convenient familiar map to use as an
initial example.
The involution~$\imath(x)=1-x$ commutes with~$T$,
and so defines a halved system~$(\overline{X},\overline{T})$.
A convenient way to visualize this system is to imagine
looking
sideways at the unit circle so that points identified
by the map~$\imath$ are seen as a single point,
as illustrated in Figure~\ref{figuretentmap}.
\begin{figure}[htbp]\begin{center}
\psfrag{1}{$0$}\psfrag{m1}{$\textstyle\frac12$}
\psfrag{i}{$\textstyle\frac34$}\psfrag{-i}{$\textstyle\frac14$}
\psfrag{A}{$[0]=\{0\}$}\psfrag{B}{$[\textstyle\frac12]=\{\textstyle\frac12\}$}
\psfrag{C}{$[0]$}\psfrag{D}{$[\textstyle\frac12]=\{\textstyle\frac12\}$}
\psfrag{II}{$[\textstyle\frac14]=\{\textstyle\frac14,\textstyle\frac34\}$}
\scalebox{1}[1]{\includegraphics{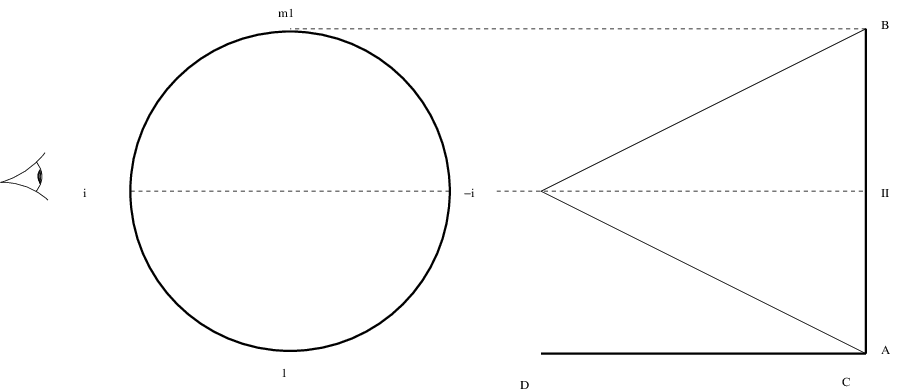}}
\caption{\label{figuretentmap} Halving
the circle doubling map gives a tent map.}
\end{center}
\end{figure}
It is straightforward to check that the quotient
system~$(\overline{X},\overline{T})$ is the tent map,
with~$\overline{X}=[0,\frac12]$
and
\[
\overline{T}(x)=\begin{cases}2x&0\le x\le\frac14,\\
1-2x&\frac14\le x\le\frac{1}{2}.\end{cases}
\]
For these two systems it is easy
to calculate that
\[
\zeta_{T}(z)=\frac{1-z}{1-2z}
\]
and
\[
\zeta_{\overline{T}}(z)=\frac{1}{1-2z}.
\]
In this example, a system with approximately~$2^n$
points of period~$n$ and a rational zeta function
is halved to a system with the same properties.
\end{example}

The following example gives a natural way in which one
could double a topological dynamical system,
simply by putting together two copies of
the system.

\begin{example}\label{double}
Given any dynamical system~$(Y,S)$
on a metric space~$(Y,d)$
we may form the doubled space~$X=Y\times\{0,1\}$ with the
metric
\[
d_X\((y,e),(y',e')\)
=d(y,y')+\begin{cases}
1&\mbox{if }e\neq e',\\
0&\mbox{if }e=e',
\end{cases}
\]
for~$y,y'\in Y$ and~$e,e'\in\{0,1\}$,
and define a map on the doubled space by
\begin{eqnarray*}
T:X&\longrightarrow& X\\
(y,e)&\longmapsto& (S(y),e+1\pmod{2}).
\end{eqnarray*}
The involution~$\imath:(y,e)\longmapsto(y,e+1\pmod{2})$
commutes with~$T$, giving a halved
system~$(\overline{X},\overline{T})$ which
can be identified with the original system~$(Y,S)$.
Clearly
\begin{equation}
\fixnumber_T(n)=\begin{cases}
0&\mbox{if $n$ is odd;}\\
2\fixnumber_S(n)&\mbox{if $n$ is even,}
\end{cases}\label{doubled}
\end{equation}
so that
\[
\limsup_{n\to\infty} \tfrac 1n \log\fixnumber_T(n) = \limsup_{n\to\infty} \tfrac 1n \log\fixnumber_S(n),
\]
meaning that
the zeta functions~$\zeta_T$ and~$\zeta_{\overline T}=\zeta_S$ have the same radius of convergence.
Moreover, the relation~\eqref{doubled} may be written as
\begin{equation}\label{coolquestion}
\zeta_T(s)=\zeta_S(s)\zeta_S(-s),
\end{equation}
showing that if~$\zeta_{S}$ is rational then~$\zeta_T$ is also rational,
though we will see later that the converse is not true (see Example~\ref{doublereprise}).
\end{example}

Our purpose here is to show how extremely unrepresentative
Examples~\ref{circledoublingtotentmap} and~\ref{double} really are.
In general, both the growth rate in the number of periodic points and the arithmetic
nature of the zeta function do not survive under doubling or halving.
However, in contrast to Examples~\ref{factorwithfewclosedorbits}
and~\ref{factorwithmanyclosedorbits} we
will show that the change of the growth rate in the number of closed orbits is restricted.

\section{Shortening, Surviving, and Gluing Orbits}

We put ourselves back in the halving situation of the introduction. Thus~$(X,T)$ is a topological dynamical system, which we recall means a pair with~$X$ a compact metric space and~$T:X\to X$ a homeomorphism with a fixed point and with finitely many points of period~$n$ for each~$n\ge1$, and~$\imath$ is a continuous involution on~$X$ which commutes with~$T$. Then~$\overline{X}$ is the quotient of~$X$ under the equivalence relation induced by~$\imath$, with quotient map~$\pi:X\mapsto\overline{X}$ given by~$\pi(x)=\{x,\imath(x)\}$. Note that~$\overline{X}$ is a metric space: if~$d_X$ is the metric on~$X$, then the metric~$d_{\overline{X}}$ on~$\overline{X}$ is given by
\[
d_{\overline{X}}(\overline{x},\overline{y})=\min\{d_X(x,y)\mid x\in\pi^{-1}(\overline{x}),y\in\pi^{-1}(\overline{y})\}.
\]
The map~$\overline{T}:\overline{X}\to\overline{X}$ is (well-)defined by the relation~$\pi\circ T=\overline T\circ\pi$, and is a homeomorphism.

The factor map~$\pi$ maps any closed orbit
under~$T$ to a closed orbit under~$\overline{T}$; conversely,
since the fibres of~$\pi$ are finite, the inverse image under~$\pi$ of a closed orbit under~$\overline{T}$ is a finite set closed under~$T$ so is a finite union of closed orbits under~$T$. In particular,~$\pi$ induces a surjective map
\begin{equation}\label{eqn:pionto}
\bigsqcup_{n=1}^\infty \orbitset_T(n) \to \bigsqcup_{n=1}^\infty \orbitset_{\overline{T}}(n).
\end{equation}

In order to analyze this more closely, let~$\tau=\{x,T(x),T^2(x),\dots,T^n(x)=x\}$ be a closed
orbit in~$(X,T)$ of length~$n$.
Then~$\imath$ and~$\tau$ can interact
in just three ways.
\begin{enumerate}
\item It could fix the orbit~$\tau$ pointwise,
(we say that~$\tau$ `survives'),
that is~$x=\imath(x)$ for all~$x\in\tau$; we
write~$\orbitset_T^s(n)$ for the set of closed orbits of length~$n$
under~$T$ that are fixed pointwise by~$\imath$. Then the factor map~$\pi$ induces an injective map~$\orbitset_T^s(n)\to\orbitset_{\overline T}(n)$.
\item It could map~$\tau$ to another closed orbit~$\tau'$ of the
same length (we say that~$\tau$ is `glued' to~$\tau'$), that is~$\imath(x)\in\tau'$ for all~$x\in\tau$;
we write~$\orbitset_T^{g}(n)$ for the set of closed
orbits of length~$n$ under~$T$ that are glued together in pairs. The factor map~$\pi$ induces a~$2$-to-$1$ map~$\orbitset_T^g(n)\to\orbitset_{\overline T}(n)$.
\item It could preserve the orbit~$\tau$ but not fix it pointwise; in that case, we must
have~$n=2k$ even and~$\imath(x)=T^{k}(x)$,
for all~$x\in\tau$, so that the orbit~$\pi(\tau)$ of~$\overline T$ has length~$k$ (we say that~$\tau$ is `halved').
we write~$\orbitset_T^h(n)$
for the set of closed orbits of
length~$n$ under~$T$ that are halved in
length, and~$\pi$ induces an injective map~$\orbitset_T^h(2k)\to\orbitset_{\overline T}(k)$.
\end{enumerate}
Clearly
\[
\orbitset_T(n)=\orbitset_T^h(n)\sqcup\orbitset_T^g(n)\sqcup\orbitset_T^s(n)
\]
is a disjoint union, and so
\begin{equation}\label{splitsumintothree}
\orbitnumber_T(n)=\orbitnumber_T^h(n)+\orbitnumber_T^g(n)+\orbitnumber_T^s(n),
\end{equation}
for any~$n\ge1$. Similarly, from the surjective map~\eqref{eqn:pionto} and the three possible behaviours, we get
\[
\orbitset_{\overline T}(n)=\pi\(\orbitset_T^h(2n)\)\sqcup\pi\(\orbitset_T^g(n)\)\sqcup\pi\(\orbitset_T^s(n)\),
\]
and it follows that
\begin{equation}\label{splitsumintothreeforquotientsystem}
\orbitnumber_{\overline{T}}(n)=\orbitnumber_T^h(2n)+\textstyle\frac12\orbitnumber_T^g(n)+\orbitnumber_T^s(n),
\end{equation}
for any~$n\ge1$. Since these numbers are finite and
\[
\orbitnumber_{\overline{T}}(1)
\ge
\orbitnumber_T(1)-\textstyle\frac 12\orbitnumber_T^g(1)
\ge
\textstyle\frac 12\orbitnumber_T(1)>0,
\]
we see that~$(\overline{X},\overline{T})$ is also a topological dynamical system.

The way in which the set of orbits of length~$n$ under~$T$
decomposes into those that halve in length, those that glue together, and
those that survive, is not arbitrary. Whatever constraints on~$\imath$
arise from having to be a continuous involution on~$X$ that
commutes with~$T$, there are some purely combinatorial
constraints as follows:
\begin{equation}\label{constraint1}
\orbitnumber_T^h(n)=0 \mbox{ if~$n$ is odd};
\end{equation}
\begin{equation}\label{constraint2}
\orbitnumber_T^g(n) \mbox{ is even for all~$n\ge1$}.
\end{equation}
These observations already constrain
the effect that halving can have on the growth
rate of closed orbits.

\begin{lemma}\label{bounds}
Suppose~$(\overline{X},\overline{T})$ is obtained
from~$(X,T)$ by halving. Then, for any~$n\ge1$,
\begin{enumerate}
\item[{\rm(a)}]
$\frac 12 \fixnumber_T(n)
\le
\fixnumber_{\overline{T}}(n)
\le
\frac 12 \(\fixnumber_T(n) +\fixnumber_T(2n)\)$;
\item[{\rm(b)}]
$\orbitnumber_{\overline{T}}(n)
\le
\orbitnumber_T(n)+ \orbitnumber_T(2n)$, and if~$n$ is odd then~$\orbitnumber_{\overline{T}}(n)\ge \frac 12 \orbitnumber_T(n)$.
\end{enumerate}
\end{lemma}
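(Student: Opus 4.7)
The plan is to derive both parts from the decompositions~\eqref{splitsumintothree} and~\eqref{splitsumintothreeforquotientsystem}, combined with the constraint~\eqref{constraint1} and the obvious non-negativity of $\orbitnumber_T^h$, $\orbitnumber_T^g$, $\orbitnumber_T^s$. Part (b) is pure bookkeeping: subtracting gives
\[
\orbitnumber_T(n)+\orbitnumber_T(2n)-\orbitnumber_{\overline{T}}(n)=\orbitnumber_T^h(n)+\tfrac12\orbitnumber_T^g(n)+\orbitnumber_T^s(n)+\orbitnumber_T^g(2n)+\orbitnumber_T^s(2n)\ge 0,
\]
which proves the upper bound; for the lower bound when $n$ is odd, \eqref{constraint1} forces $\orbitnumber_T(n)=\orbitnumber_T^g(n)+\orbitnumber_T^s(n)$, and then $\orbitnumber_{\overline{T}}(n)\ge\tfrac12\orbitnumber_T^g(n)+\orbitnumber_T^s(n)\ge\tfrac12\orbitnumber_T(n)$.

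For part (a) I propose a set-theoretic shortcut that avoids re-indexing divisor sums. Introduce $B_n=\{x\in X:T^n(x)=\imath(x)\}$. Since $\overline{T}^n([x])=[x]$ iff $T^n(x)\in\{x,\imath(x)\}$, one has $\pi^{-1}(\fixset_{\overline{T}}(n))=\fixset_T(n)\cup B_n$; applying $T^n$ twice and using $\imath^2=\mathrm{id}$ together with the commutation relation shows $B_n\subseteq\fixset_T(2n)$; and both sets are $\imath$-invariant with common $\imath$-fixed locus $F_n=\{x\in\fixset_T(n):\imath(x)=x\}$, which is precisely $\fixset_T(n)\cap B_n$. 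Counting preimages of $\pi$ --- a fibre has size $1$ exactly over points of $\pi(F_n)$ and size $2$ elsewhere in $\fixset_{\overline{T}}(n)$ --- gives $|\pi^{-1}(\fixset_{\overline{T}}(n))|=2\fixnumber_{\overline{T}}(n)-|F_n|$, while inclusion--exclusion gives $|\pi^{-1}(\fixset_{\overline{T}}(n))|=\fixnumber_T(n)+|B_n|-|F_n|$. The $|F_n|$ terms cancel, yielding
\[
\fixnumber_{\overline{T}}(n)=\tfrac12\bigl(\fixnumber_T(n)+|B_n|\bigr),
\]
and the two inequalities in (a) follow from $0\le|B_n|\le\fixnumber_T(2n)$.

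The only mild obstacle is checking the various $\imath$-invariance and intersection claims, but each reduces to a one-line calculation from $\imath\circ T=T\circ\imath$ and $\imath^2=\mathrm{id}$; once those are in hand the whole lemma is essentially mechanical.
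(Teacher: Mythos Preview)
Your argument is correct, with one small slip: in the displayed difference for part~(b), the term $\orbitnumber_T^s(n)$ should not appear (it cancels against the same term in $\orbitnumber_{\overline T}(n)$); the correct right-hand side is $\orbitnumber_T^h(n)+\tfrac12\orbitnumber_T^g(n)+\orbitnumber_T^g(2n)+\orbitnumber_T^s(2n)$, which is still $\ge0$. Apart from that, part~(b) is essentially identical to the paper's proof, which likewise reads the bounds straight off~\eqref{splitsumintothree} and~\eqref{splitsumintothreeforquotientsystem}.

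For part~(a) your route differs from the paper's. The paper handles the two inequalities separately and more directly: the lower bound is immediate from the fact that the fibres of $\pi$ have size at most~$2$ and $\pi(\fixset_T(n))\subseteq\fixset_{\overline T}(n)$; the upper bound comes from splitting $\fixset_{\overline T}(n)$ according to whether a preimage lies in $\fixset_T(n)$ or in $\fixset_T(2n)\setminus\fixset_T(n)$ and bounding each piece. You instead introduce the auxiliary set $B_n=\{x:T^n(x)=\imath(x)\}$ and, via inclusion--exclusion plus fibre-counting, obtain the exact identity $\fixnumber_{\overline T}(n)=\tfrac12\bigl(\fixnumber_T(n)+|B_n|\bigr)$, whence both bounds follow from $0\le|B_n|\le\fixnumber_T(2n)$. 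The paper's argument is slightly quicker if one only wants the inequalities, but your identity carries strictly more information and makes it transparent precisely when each bound is attained (namely $B_n=\varnothing$ for the lower, $B_n=\fixset_T(2n)$ for the upper).
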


\begin{proof}
(a) The lower bound for~$\fixnumber_{\overline{T}}(n)$ comes from the fact that the fibres of the factor map~$\pi$ have cardinality at most~$2$; since~$\pi$ maps~$\fixset_T(n)$ to~$\fixset_{\overline{T}}(n)$, we
deduce that~$\fixnumber_{\overline{T}}(n)\ge\frac 12\fixnumber_T(n)$.
The bound is achieved if all orbits of length dividing~$n$ glue together in pairs.

The upper bound comes from the containment~$\pi^{-1}(\fixset_{\overline{T}}(n))\subseteq\fixset_T(2n)$. If~$x\in\fixset_T(2n)\setminus\fixset_T(n)$, then~$\pi(x)\in\fixset_{\overline{T}}(n)$ if and only if~$x$ lies in an orbit which halves in length, in which case~$x$ lies in a fibre of cardinality~$2$. Thus
\begin{eqnarray*}
\fixnumber_{\overline{T}}(n) &=& \left\vert\fixset_{\overline{T}}(n)\cap\pi(\fixset_T(2n)\setminus\fixset_T(n))\right\vert + \left\vert\fixset_{\overline{T}}(n)\cap\pi(\fixset_T(n))\right\vert \\
&\le& \tfrac 12\(\fixnumber_T(2n)-\fixnumber_T(n)\) + \fixnumber_T(n) \\
& =& \tfrac 12\(\fixnumber_T(2n)+\fixnumber_T(n)\).
\end{eqnarray*}
The upper bound is achieved if all orbits of order dividing~$n$ survive, while all other orbits of order dividing~$2n$ halve.

(b) For the upper bound for~$\orbitnumber_{\overline{T}}(n)$,
we have
\[
\orbitnumber_{\overline{T}}(n)
=
\orbitnumber^h_T(2n)+\tfrac{1}{2}\orbitnumber^g_T(n)+
\orbitnumber^s_T(n)
\le
\orbitnumber_{T}(2n)+ \orbitnumber_T(n),
\]
by~\eqref{splitsumintothree} and~\eqref{splitsumintothreeforquotientsystem}. Again this is achieved when all orbits of
order~$n$ survive, while all orbits of order~$2n$ halve.

For the lower bound, when~$n$ is odd we
have~$\orbitnumber_T^h(n)=0$ so that
\[
\orbitnumber_T(n)=\orbitnumber_T^s(n)+\orbitnumber_T^g(n)
\]
and, from~\eqref{splitsumintothreeforquotientsystem},
\[
\orbitnumber_{\overline{T}}(n)
\ge \tfrac{1}{2}\orbitnumber^g_T(n)+\orbitnumber^s_T(n)
\ge \tfrac{1}{2}\orbitnumber_T(n).
\]
This bound is achieved if all orbits of length~$n$ are glued in pairs.
\end{proof}

\begin{remark}
Note that Lemma~\ref{bounds} does not give a lower bound for~$\orbitnumber_{\overline{T}}(n)$ when~$n$ is even. Specifically, when~$n$ is even, all orbits of length~$n$ might halve in length
while orbits of length~$2n$ retain their length. Thus the only possible lower bound is the trivial one~$\orbitnumber_{\overline{T}}(n)\ge 0$.
\end{remark}

In the case that~$\orbitnumber_T(n)$ grow exponentially,
Lemma~\ref{bounds} immediately gives bounds on the logarithmic growth rate of~$\orbitnumber_{\overline{T}}(n)$.

\begin{corollary}\label{cor:expbounds}
Let~$(X,T)$ be a topological dynamical system, let~$\imath$ be a continuous involution on~$X$ commuting with~$T$, and let~$(\overline{X},\overline{T})$ be the halved system. Suppose there is a real number~$\lambda>0$ such that~$\limsup\limits_{n\to\infty} \frac 1n\log\fixnumber_T(n)=\lambda$. Then
\[
\lambda\le \limsup_{n\to\infty} \tfrac 1n\log\fixnumber_{\overline T}(n)\le 2\lambda.
\]
\end{corollary}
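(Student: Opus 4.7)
The plan is to deduce the corollary directly from the finite-$n$ bounds of Lemma~\ref{bounds}(a) by taking logarithms, dividing by $n$, and passing to the $\limsup$; no further structural argument is needed.

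For the lower bound, starting from $\fixnumber_{\overline T}(n)\ge\tfrac{1}{2}\fixnumber_T(n)$, I take logarithms to obtain $\tfrac{1}{n}\log\fixnumber_{\overline T}(n)\ge\tfrac{1}{n}\log\fixnumber_T(n)-\tfrac{\log 2}{n}$. Since the error term tends to zero, passing to $\limsup$ immediately yields $\limsup_{n\to\infty}\tfrac{1}{n}\log\fixnumber_{\overline T}(n)\ge\lambda$.

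For the upper bound I first observe that $\fixnumber_T(n)\le\fixnumber_T(2n)$, since every $T^n$-fixed point is automatically a $T^{2n}$-fixed point, so $\fixset_T(n)\subseteq\fixset_T(2n)$. Combined with Lemma~\ref{bounds}(a), this gives the cleaner bound $\fixnumber_{\overline T}(n)\le\fixnumber_T(2n)$. Taking logs and dividing by $n$ yields $\tfrac{1}{n}\log\fixnumber_{\overline T}(n)\le 2\cdot\tfrac{1}{2n}\log\fixnumber_T(2n)$, and passing to $\limsup$ on the right the subsequence along even integers cannot exceed the $\limsup$ of the full sequence, so the right-hand side is bounded by $2\lambda$.

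There is no serious obstacle: the statement is a direct consequence of Lemma~\ref{bounds}(a), and the only noteworthy feature is the factor $2$ in the upper bound. That factor traces back to the fact that a fixed point of $\overline T^n$ may be the image of a $2n$-periodic point of $T$ lying on a halved orbit, combined with the harmless rescaling $\tfrac{1}{n}\log\fixnumber_T(2n)=2\cdot\tfrac{1}{2n}\log\fixnumber_T(2n)$. One might ask whether the $2\lambda$ bound is ever achieved; the examples the authors promise later in the paper should confirm that it is, so no sharpening is possible in this generality.
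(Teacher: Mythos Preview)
Your argument is correct and is exactly what the paper intends: the corollary is stated there without proof, with only the remark that Lemma~\ref{bounds} ``immediately gives bounds on the logarithmic growth rate,'' and your derivation from part~(a) is the straightforward way to cash this out. The only minor addition worth noting is that $\fixnumber_T(n)\ge 1$ for all $n$ (since $T$ has a fixed point by assumption), so the logarithms are always defined.
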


We will see later (see Corollary~\ref{theorem}) that every growth rate in the closed interval~$[\lambda,2\lambda]$ is obtainable.

\section{The basic lemma}

Our main observation is that, if we are free to choose
the topological dynamical systems, then~\eqref{constraint1} and~\eqref{constraint2} are the
only constraints on the behaviour
of closed orbits under halving.
This is a simple extension of
an elementary remark in~\cite{MR1873399}: for any sequence~$(a_n)$
of non-negative integers, there is a topological dynamical system~$(X,T)$
with~$\orbitnumber_T(n)=a_n$ for all~$n\ge1$.

\begin{lemma}\label{basiclemma}
Let~$(b_n^s)$,~$(b_n^g)$, and~$(b_n^h)$ be three sequences of
non-negative integers, with~$b_1^s>0$. Define sequences~$(a_n^h)$,~$(a_n^g)$, and~$(a_n^s)$ by
\[
a_n^s=b_n^s,\quad
a_n^g=2b_n^g,
\quad\mbox{and}\quad
a_n^h=\begin{cases} b_{n/2}^h &\mbox{if~$n$ is even},\\ 0&\mbox{otherwise.} \end{cases}
\]
Define~$a_n=a_n^s+a_n^g+a_n^h$ and~$b_n=b_n^s+b_n^g+b_n^h$ for all~$n\ge 1$.
Then there are a topological dynamical system~$(X,T)$ and a continuous
involution~$\imath:X\to X$ that commutes with~$T$, such that~$\orbitnumber_T(n)=a_n$
and~$\orbitnumber_{\overline{T}}(n)=b_n$, for all~$n\ge1$.
\end{lemma}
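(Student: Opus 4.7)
The plan is to construct $(X,T)$ together with $\imath$ by assembling all the required closed orbits as a countable disjoint union of finite sets, then compactifying by forcing the orbits to accumulate to a chosen fixed point. This is a mild strengthening of the construction used in the existing ``realization'' result that any nonnegative sequence arises as $(\orbitnumber_T(n))$ for some system.

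I would begin with the abstract orbit data. For each $n\ge1$ I take $b_n^s$ disjoint cyclic orbits of length $n$ on each of which $\imath$ acts trivially; $b_n^g$ disjoint pairs of cyclic orbits of length $n$ on each pair of which $\imath$ acts by a $T$-equivariant point-by-point exchange (so $2b_n^g$ orbits of length $n$ arise from this family); and $b_n^h$ disjoint cyclic orbits of length $2n$ on each of which $\imath$ acts as $T^n$. On every constituent $\imath$ is an involution that commutes with $T$, because it is either a power of $T$ on the orbit or a $T$-equivariant bijection between two orbits. The hypothesis $b_1^s\ge1$ provides at least one fixed point~$x_0$ with $\imath(x_0)=x_0$.

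Next I would topologize. Enumerate the orbits other than $\{x_0\}$ as $\tau_1,\tau_2,\dots$ and realize $X$ in a metric space, for instance inside $\mathbb R^2$ as small disjoint configurations about $x_0$, with each $\tau_m$ placed inside a ball of radius $\varepsilon_m$ about $x_0$ where $\varepsilon_m\to0$, and with distinct orbits at positive distance from one another. In the glued case both orbits of a pair are placed in the same ball so that $\imath$ never moves a point by more than $2\varepsilon_m$. Then $X$ is a compact metric space whose unique non-isolated point is $x_0$; each $\tau_m$ is clopen, so $T$ and $\imath$ are automatically continuous on $X\setminus\{x_0\}$ because they merely permute the finite set $\tau_m$, and continuous at $x_0$ because any point near $x_0$ lies in some $\tau_m$ of small diameter and its images under $T$ and $\imath$ stay in the same small ball. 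Finiteness of $\fixnumber_T(n)$ is clear since only finitely many orbits have length dividing~$n$.

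Finally I would read off the counts. The construction gives $\orbitnumber_T^s(n)=b_n^s$, $\orbitnumber_T^g(n)=2b_n^g$, and $\orbitnumber_T^h(n)=b_{n/2}^h$ for $n$ even (and $0$ otherwise), so \eqref{splitsumintothree} yields $\orbitnumber_T(n)=a_n$, while \eqref{splitsumintothreeforquotientsystem} yields $\orbitnumber_{\overline T}(n)=b_n^h+b_n^g+b_n^s=b_n$. The main obstacle is the topological bookkeeping: arranging that the disjoint union of finite orbits, plus $x_0$, is genuinely a compact metric space on which $T$ and $\imath$ are continuous. Once the orbits are made to shrink into $x_0$ this is routine, and the combinatorial identifications are then forced by \eqref{splitsumintothree} and \eqref{splitsumintothreeforquotientsystem}.
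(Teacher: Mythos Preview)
Your proposal is correct and follows essentially the same approach as the paper: assemble the required surviving, glued, and halved orbits as a countable discrete set, compactify by making them accumulate to a fixed point furnished by $b_1^s\ge1$, and then read off the orbit counts via~\eqref{splitsumintothree} and~\eqref{splitsumintothreeforquotientsystem}. The paper's formal proof differs only cosmetically, using an explicit abstract metric (with $d(x,y)=1/\min\{m,n\}$ for $x\in X_m$, $y\in X_n$) rather than an embedding in~$\mathbb R^2$, but the underlying topology and the combinatorial bookkeeping are identical. One tiny wording point: in the glued case $\imath$ does not permute a single $T$-orbit $\tau_m$ but rather swaps it with its partner; since every point of $X\setminus\{x_0\}$ is isolated this does not affect your continuity argument, but you may want to phrase it as ``$T$ and $\imath$ permute isolated points'' rather than ``permute the finite set $\tau_m$.''
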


Notice in particular that taking~$b_n^s=a_n$ and~$b_n^g=b_n^h=0$, for all~$n\ge1$
recovers the basic lemma of~\cite{MR1873399}.

Before giving an algebraic proof, we give a more
geometric sketch to give an idea of what is happening. We can construct~$X$ as a closed (and hence
compact) subset of the triangle~$\{(x,y)\mid
0\le x\le1,0\le y\le x\}\subset\mathbb R^2$,
with the metric inherited from~$\mathbb R^2$.
Above each
point~$(\frac1n,0)$ for~$n\ge2$ draw~$a_n$
disjoint regular~$n$-gons in such
a way that all of them are disjoint as subsets
of the plane.
By hypothesis~$a_1\ge1$, and we draw~$a_1-1$
points ($1$-gons) above~$(1,0)$. Finally locate a single~$1$-gon at~$(0,0)$
(which may be thought of as a point `at infinity').
The space~$X$ is now defined to be the
union of all the vertices of the polygons. (See Figure~\ref{triangle}.)
It is closed because all but one point is isolated,
and the accumulation point~$(0,0)$ is,
by construction, a member of~$X$.
Number the vertices of each~$n$-gon with
the numbers~$1$ to~$n$ consecutively clockwise,
so that we may speak of the `same' point on
two disjoint~$n$-gons
as being the point with the same symbol.
The homeomorphism~$T$ is defined to be the map that
takes each point on any~$n$-gon to the next
point in a clockwise orientation around the same~$n$-gon
(equivalently, adding one using the labels;
the action of this map is illustrated by the
lines joining vertices of the polygons
in Figure~\ref{triangle}).
This defines a homeomorphism since all but one
point is isolated in~$X$, and all the points
close to the fixed point~$(0,0)$ are moved
by a very small distance.
Then~$(X,T)$
is a topological dynamical system, and
by construction~$\orbitnumber_T(n)=a_n$ for
all~$n\ge1$.

\begin{figure}[htbp]\begin{center}
\psfrag{1}{$1$}\psfrag{2}{$\textstyle\frac12$}
\psfrag{3}{$\textstyle\frac13$}
\psfrag{4}{$\textstyle\frac14$}
\psfrag{I}{$\infty$}
\scalebox{1}[1]{\includegraphics{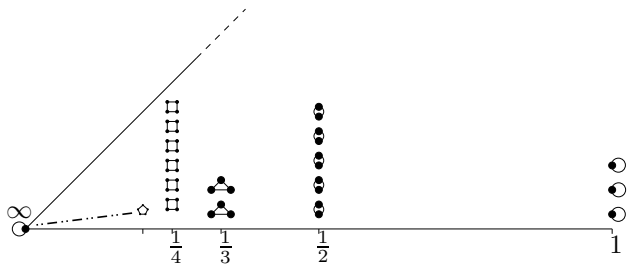}}
\caption{\label{triangle} Building the system $(X,T)$.}
\end{center}
\end{figure}

Now we define an action of~$C_2$ on~$X$ using the
numbers~$a_n^s,a_n^g$, and~$a_n^h$ as follows.
\begin{itemize}
\item For each~$n\ge1$ pick~$\frac12a_n^g$
pairs of~$n$-gons above~$(\frac 1n,0)$ and
define the action of~$\imath$ to send
a point on any one of them to the same point
on the paired~$n$-gon
(these are the glued orbits).
\item For each~$n\ge1$ pick~$a_n^h$ of the~$n$-gons
above~$(\frac 1n,0)$, chosen from those that have not been
chosen already for gluing, and
on each polygon (which will by hypothesis
have even length) define the
action of~$\imath$ to be rotation by~$\pi$ (these are
the halved orbits).
\item On all the remaining points that
are vertices of polygons that are neither
glued nor halved, define~$\imath$ to be
the identity map (these are the
surviving orbits).
\end{itemize}
It is clear that~$\imath$ is continuous (since all points close to the fixed point are moved by a very small distance) and commutes with~$T$
and so defines a halved
system~$(\overline{X},\overline{T})$ which, by construction, has the required numbers of orbits.

Note that, in the proof below, we do not refer to the triangle in the plane and the metric on~$X$ we give is not the same one as in this sketch, but it does give the same topology.

\begin{proof}[Proof of Lemma~\ref{basiclemma}]
We write~$C_2=\{e,\imath\}$, where $e$ is the identity element, for the cyclic group of order~$2$. We begin by describing~$X$ as a set, before compactifying. It will take the form
\[
X=\bigsqcup\limits_{n\geq 1} X_n,
\]
where each~$X_n$ will be the union of closed orbits of length~$n$.
We set
\[
X_n=X_n^s\sqcup X_n^g\sqcup X_n^h,\quad\mbox{where}\quad\begin{cases}
X_n^s = \{1,2,\ldots,a_n^s\} \times \Z/n\Z, \\
X_n^g = \{1,2,\ldots, b^g_n\} \times C_2 \times \Z/n\Z, \\
X_n^h = \{1,2,\ldots,a_n^h\} \times \Z/n\Z.
\end{cases}
\]
We define~$T:X\to X$ and~$\imath:X\to X$ by describing their restrictions to each of the sets~$X_n^s,X_n^g,X_n^h$, which will be preserved:
\begin{itemize}
\item for~$x=(i,k)\in X_n^s$, put~$T(x)=(i,k+1\pmod{n})$ and~$\imath(x)=x$;
\item for~$x=(i,\gamma,k)\in X_n^g$, put~$T(x)=(i,\gamma,k+1\pmod{n})$ and~$\imath(x)=(i,\imath\gamma,k)$;
\item for~$x=(i,k)\in X_n^h$, so that~$n$ is even, we put~$T(x)=(i,k+1\pmod{n})$ and~$\imath(x)=(i,k+\frac n2\pmod{n})$.
\end{itemize}
Then~$\imath$ commutes with~$T$ and, by construction, the map~$T$ and the induced map~$\overline{T}$ on the quotient set~$\overline{X}$ have the required numbers of orbits.

It remains to show that~$X$ can be given the structure of a compact metric space, with respect to which~$T$ and~$\imath$ are homeomorphisms. To do this, we pick a point in~$X_1^s$ (which is non-empty by hypothesis) and call it~$\infty$ and define a metric as follows: if~$x\in X_m$ and~$y\in X_n$, with~$x\not\in{y,\infty}$, then
\[
d(x,y) = d(y,x) = \begin{cases}
\frac{1}{m}& \mbox{if } y=\infty, \\
\frac{1}{\min\{m,n\}}& \mbox{otherwise},
\end{cases}
\]
and~$d(x,x)=0$. It is straightforward to check that this does indeed define a metric and, given any open set~$U$ containing~$\infty$, there exists~$N\ge 1$ such that~$U$ contains~$\bigsqcup_{n\ge N} X_n$ so that~$X\setminus U$ is finite; hence~$X$ is compact. Moreover, since~$T$ and~$\imath$ preserve the sets~$X_n$ and the point~$\infty$, they are isometries, so homeomorphisms, and we are done.
\end{proof}

\section{Growth in closed orbits}

Lemma~\ref{basiclemma} shows that any pair of sequence~$(a_n),(b_n)$ for which the combinatorial constraints~\eqref{splitsumintothree}--\eqref{constraint2} are satisfied does in fact arise as the orbit count of a pair of systems related by halving. However, it is not so easy to give conditions directly on the sequences~$(a_n),(b_n)$ which guarantee that the combinatorial constraints are indeed satisfied. The following result gives some sufficient conditions.

\begin{proposition}\label{prop:existence}
Let~$(a_n)$ be a sequence of non-negative integers with~$a_1\ge 1$ such that there is an integer~$N\ge 1$ with~$a_{2n}\ge \frac 12a_n$, for all~$n\ge N$. Let~$(b_n)$ be any sequence of integers such that~$b_1>\frac 12a_1$ and
\[
\begin{cases}
\tfrac 12 a_n\le b_n\le a_n &\hbox{for }n<N,\\
\tfrac 12 a_n\le b_n\le a_{2n} &\hbox{for }n\ge N.
\end{cases}
\]
Then there are a topological dynamical system~$(X,T)$ and a continuous
involution~$\imath:X\to X$ that commutes with~$T$, such that~$\orbitnumber_T(n)=a_n$
and~$\orbitnumber_{\overline{T}}(n)=b_n$, for all~$n\ge1$.
\end{proposition}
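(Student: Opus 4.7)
The plan is to reduce the proposition to Lemma~\ref{basiclemma}. I want to produce non-negative integer sequences $(b_n^s), (b_n^g), (b_n^h)$, with $b_1^s \geq 1$, satisfying
\[
a_n = b_n^s + 2b_n^g + \chi_n\, b_{n/2}^h, \qquad b_n = b_n^s + b_n^g + b_n^h,
\]
where $\chi_n = 1$ if $n$ is even and $\chi_n = 0$ otherwise. Abbreviating $s_n = b_n^s$, $g_n = b_n^g$, $h_n = b_n^h$, these two equations rearrange to
\[
g_n = a_n - b_n + h_n - \chi_n h_{n/2}, \qquad s_n = 2b_n - a_n + \chi_n h_{n/2} - 2h_n,
\]
so the construction reduces to choosing the $h_n$ to make $s_n, g_n \geq 0$ for all $n$ and $s_1 \geq 1$.

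The guiding intuition is that halving a length-$2n$ orbit should only be used to supply a shortfall in~$\orbitnumber_{\overline T}(n)$: for $n < N$ the bound $b_n \leq a_n$ means no shortfall is possible, so I take $h_n = 0$; for $n \geq N$ I let $h_n$ absorb the potential excess $b_n - a_n$. Concretely, I would set $h_n = 0$ for $n < N$ and recursively
\[
h_n = \max\{0,\, b_n - a_n + \chi_n h_{n/2}\} \quad \text{for } n \geq N.
\]
A direct substitution then gives $g_n = \max\{0,\, a_n - b_n - \chi_n h_{n/2}\} \geq 0$ for free.

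The verification that $s_n \geq 0$ splits on the sign of $b_n - a_n + \chi_n h_{n/2}$: when non-positive, $s_n = 2b_n - a_n + \chi_n h_{n/2} \geq 2b_n - a_n \geq 0$ by the hypothesis $b_n \geq \tfrac 12 a_n$; when positive, $s_n = a_n - \chi_n h_{n/2}$, which requires the auxiliary bound $h_{n/2} \leq a_n$ (vacuous when $n$ is odd). Proving this bound is the main obstacle, and is where the hypothesis $b_n \leq a_{2n}$ for $n \geq N$ enters. I would prove $h_m \leq a_{2m}$ for all $m \geq 1$ by induction on $m$: the case $m < N$ is trivial, and for $m \geq N$,
\[
h_m \leq b_m - a_m + \chi_m h_{m/2} \leq b_m - a_m + a_m = b_m \leq a_{2m},
\]
using the inductive bound $h_{m/2} \leq a_m$ when $m$ is even.

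Finally, $s_1 = 2b_1 - a_1 - 2 h_1$, and a short case analysis (on whether $N = 1$ and on the sign of $b_1 - a_1$) gives $s_1 \geq 1$ from the strict inequality $b_1 > \tfrac 12 a_1$, using that all quantities are integers. An application of Lemma~\ref{basiclemma} then delivers the required topological dynamical system $(X, T)$ together with its commuting involution~$\imath$.
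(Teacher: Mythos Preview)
Your proposal is correct and is essentially the same construction as the paper's: the paper defines $b_k^h$ recursively by the case split $b_k\le a_k-b_{k/2}^h$ versus $b_k>a_k-b_{k/2}^h$, which is exactly your $h_n=\max\{0,\,b_n-a_n+\chi_n h_{n/2}\}$ written without the $\max$ notation, and the paper carries the invariant $b_k^h\le a_{2k}$ through the recursion just as you do. The only cosmetic difference is that you set $h_n=0$ for $n<N$ by fiat, whereas the paper observes that the first case of its recursion automatically applies for $k<N$; these give identical sequences.
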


The conditions on the pair of sequences~$(a_n),(b_n)$ here are not necessary for the existence of a suitable halving system, but they are sufficient for our interests and are not so far from being necessary: for~$n>N$ odd, the condition
\[
\tfrac 12 a_n\le b_n\le a_{2n}
\]
is necessary by Lemma~\ref{bounds}.

\begin{proof} In order to use Lemma~\ref{basiclemma}, we recursively define non-negative integers~$b_n^s,b_n^g$ and~$b_n^h$ such that
\[
b_n=b_n^s+b_n^g+b_n^h,\quad
a_n=b_n^s+2b_n^g+b_{n/2}^h,\quad\mbox{and}\quad
b_n^h\le a_{2n},
\]
where we understand~$b_{n/2}^h=0$ when~$n/2\not\in\Z$. So suppose~$k\ge 1$ and we have defined these for~$n<k$. Then there are two cases.

If~$b_k\le a_k-b_{k/2}^h$, which we note is always the case for~$k<N$, then we put
\[
b_k^g=a_k-b_k-b_{k/2}^h,\quad
b_k^s=b_k-b_k^g,\quad
b_k^h=0.
\]
On the other hand, if~$b_k> a_k-b_{k/2}^h$, then we put
\[
b_k^g=0,\quad
b_k^s=a_k-b_{k/2}^h,\quad
b_k^h=b_k-b_k^s.
\]
These are non-negative and, in the latter case, we have~$b_k^h\le b_k\le a_{2k}$, since~$k\ge N$. Note also that, in either case,~$b_1^s>0$. Now Lemma~\ref{basiclemma} implies the result.
\end{proof}

As a consequence, we get the following result when we consider exponential orbit growth rates.

\begin{corollary}\label{theorem}
Let~$\lambda,\eta,c$ be positive real numbers with~$\lambda>1$ and
\[
\begin{cases} \mbox{$\eta=\lambda$ and~$c\geq\frac{1}{2}$, or}\\
\mbox{$\eta\in(\lambda, \lambda^2)$, or}\\
\mbox{$\eta=\lambda^2$ and~$0<c\leq 1$.}
\end{cases}
\]
Then there exist a topological dynamical system~$(X,T)$
and an involution~$\imath$ on~$X$ commuting with~$T$,
such that
\[
\orbitnumber_T(n)\sim\lambda^n \quad \mbox{and}\quad
\orbitnumber_{\overline{T}}(n)\sim c\eta^n\quad \mbox{as $ n\to\infty $}.
\]
\end{corollary}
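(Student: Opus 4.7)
The plan is to apply Proposition~\ref{prop:existence} to explicit integer sequences~$(a_n)$ and~$(b_n)$ with~$a_n\sim\lambda^n$ and~$b_n\sim c\eta^n$, checking the hypotheses case by case.

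In the three \emph{generic} subcases --- namely~$\eta=\lambda$ with~$c>\tfrac12$,~$\eta\in(\lambda,\lambda^2)$, and~$\eta=\lambda^2$ with~$0<c<1$ --- I would take~$a_n=\lceil\lambda^n\rceil$ and~$b_n=\lceil c\eta^n\rceil$ for~$n$ large, and~$a_n=b_n=1$ for small~$n$. Since~$\lambda>1$, the ratio~$a_{2n}/a_n$ tends to infinity, so~$a_{2n}\ge\tfrac12 a_n$ holds for all~$n\ge N$ for some~$N$. The sandwich condition~$\tfrac12 a_n\le b_n\le a_{2n}$ for~$n\ge N$ reduces asymptotically to~$c\eta^n\ge\tfrac12\lambda^n$ and~$c\eta^n\le\lambda^{2n}$: the first is automatic when~$\eta>\lambda$ and holds eventually when~$\eta=\lambda$ provided~$c>\tfrac12$; the second is automatic when~$\eta<\lambda^2$ and holds eventually when~$\eta=\lambda^2$ provided~$c<1$. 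The auxiliary conditions~$\tfrac12 a_n\le b_n\le a_n$ for~$n<N$ and~$b_1>\tfrac12 a_1$ are trivial with the small-$n$ choice~$a_n=b_n=1$.

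The two boundary cases~$(\eta,c)=(\lambda,\tfrac12)$ and~$(\eta,c)=(\lambda^2,1)$ require a more careful choice of integer approximation so that the sandwich inequalities hold as \emph{exact} integer inequalities. For~$(\lambda,\tfrac12)$ I would take~$a_n=2\lceil\lambda^n/2\rceil$, an even integer still satisfying~$a_n\sim\lambda^n$, and~$b_n=a_n/2$; then~$b_n=\tfrac12 a_n$ holds with equality, $b_n\sim\tfrac12\lambda^n=c\eta^n$, and the upper bound~$b_n\le a_{2n}$ holds for large~$n$ since~$a_{2n}/b_n\to\infty$. For~$(\lambda^2,1)$ I would take~$a_n=\lceil\lambda^n\rceil$ and~$b_n=\lfloor\lambda^{2n}\rfloor$, so that~$b_n\le\lambda^{2n}\le a_{2n}$ automatically and~$b_n\sim\lambda^{2n}=c\eta^n$, while the lower bound~$b_n\ge\tfrac12 a_n$ is immediate for large~$n$ since~$b_n/a_n\sim\lambda^n\to\infty$.

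The main obstacle is precisely these two boundary cases: the desired growth rate sits on the extreme of what Lemma~\ref{bounds} permits, so one cannot simply round~$c\eta^n$ to the nearest integer but must coordinate the integer approximations of~$a_n$ and~$b_n$ so that the sandwich inequalities hold as exact integer inequalities. Once the sequences are in place in each case, Proposition~\ref{prop:existence} produces the required system~$(X,T)$ and commuting involution~$\imath$, and the identities~$\orbitnumber_T(n)=a_n\sim\lambda^n$ and~$\orbitnumber_{\overline T}(n)=b_n\sim c\eta^n$ give the desired conclusion.
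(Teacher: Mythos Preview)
Your proposal is correct and follows the same route as the paper: choose integer sequences~$a_n\sim\lambda^n$ and~$b_n\sim c\eta^n$ and feed them into Proposition~\ref{prop:existence}. The paper simply takes~$a_n=\lceil\lambda^n\rceil$ for all~$n$, and~$b_n=a_n$ for~$n<N$, $b_n=\lceil c\eta^n\rceil$ for~$n\ge N$, with~$N$ chosen large enough.

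The one point worth noting is that your separate treatment of the two boundary cases is unnecessary. For~$(\eta,c)=(\lambda,\tfrac12)$ the uniform choice already works because of the elementary inequality~$\lceil x\rceil\le 2\lceil x/2\rceil$ (since~$2\lceil x/2\rceil$ is an integer at least~$x$), which gives~$\tfrac12\lceil\lambda^n\rceil\le\lceil\tfrac12\lambda^n\rceil$ exactly. For~$(\eta,c)=(\lambda^2,1)$ one has~$b_n=\lceil\lambda^{2n}\rceil=a_{2n}$, so the upper bound~$b_n\le a_{2n}$ holds with equality. Thus the ``obstacle'' you identify---that the sandwich inequalities might fail as exact integer inequalities at the boundary---does not in fact arise, and the paper dispenses with the case split. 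Your modified sequences~$a_n=2\lceil\lambda^n/2\rceil$ and~$b_n=\lfloor\lambda^{2n}\rfloor$ are perfectly fine, just not needed.
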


\begin{proof}
Let~$N>1$ be any integer such that~$c\eta^N<\lambda^{2N}$ and define sequences by
\[
a_n=\lceil\lambda^n\rceil,\quad
b_n=\begin{cases} a_n&\mbox{if }n<N, \\ \lceil c\eta^n\rceil&\mbox{if }n\ge N,\end{cases}
\]
for~$n\ge 1$. This gives a pair of sequences satisfying the hypotheses of Proposition~\ref{prop:existence}, from which the result follows.
\end{proof}

\section{Dynamical zeta functions}

Bowen and Lanford~\cite[Th.~2]{MR0271401}
showed that there are only countably many
rational dynamical zeta functions, so Corollary~\ref{theorem}
shows in particular that halving and doubling cannot preserve the
property of having a rational zeta function.
We now discuss some examples that give concrete
instances of this phenomenon. The arguments all rely on the
following facts: a power series with positive radius of convergence
represents a rational function if and only if the coefficients
satisfy a linear recurrence (see~\cite[Sec.1.1]{MR1990179}). 
Moreover, the Skolem--Mahler--Lech Theorem
says that, in any linear recurrence sequence~$(a_n)$, the set of
zeros, comprising those values of~$n\in\mathbb N$ for
which~$a_n=0$,
is the union of a finite set of arithmetic progressions
and a finite set (see~\cite[Ch.~1]{MR1990179} for further details
and a proof).

\begin{example}\label{doublereprise}
We revisit Example~\ref{double}, so that~$(Y,S)$ is
a topological dynamical system and~$X=Y\times\{0,1\}$ with the
map~$T(y,e)=(S(y),e+1\pmod{2})$.
The involution~$\imath:(y,e)=(y,e+1\pmod{2})$
commutes with~$T$, giving the halved
system~$(\overline{X},\overline{T})=(Y,S)$.
There is sufficient freedom in the choice
of orbits of odd length under~$S$ to allow us to find examples
with~$\zeta_{\overline T}=\zeta_S$ irrational but~$\zeta_T$ rational. In particular, we may take
\[
\fixnumber_S(n)=\begin{cases}
2^n+1&\mbox{if $n$ is even;} \\
\sum\limits_{d\vert n} d2^{(d-1)/2}&\mbox{if $ n $ is odd.}
\end{cases}
\]
It is a pleasant exercise to verify that these
really do arise from a topological dynamical system
(that is,~$\frac1n\orbitnumber_S(n)$ is a non-negative integer for each~$n$). Then by~\eqref{doubled}
\[
\zeta_T(z)= \frac{1}{(1-z^2)(1-4z^2)}
\]
is rational. On the other hand
\[
z\frac{\zeta_S'(z)}{\zeta_S(z)}
= \sum_{n=1}^\infty \fixnumber_S(n)z^n =
\frac z{1-z}+\frac{4z^2}{1-4z^2}+ \frac{6z^3-4z^5}{(1-2z^2)^2}+ \varphi(z),
\]
where
\[
\varphi(z)
=
\sum_{n=1}^\infty z^{2n+1}\sum_{\substack{ d\vert 2n+1\\ d\neq 1, 2n+1}} d2^{(d-1)/2}.
\]
We claim that~$\varphi$ is irrational, so that~$\zeta_S$ is irrational.
To see this, note that the
coefficient of~$z^n$ in~$\varphi(z)$ vanishes
precisely when~$n$ is even or~$n$ is an odd prime; since the set of primes is infinite, while any arithmetic progression contains composites, the Skolem--Mahler--Lech Theorem implies
the sequence of coefficients cannot be a linear
recurrence sequence and hence~$\varphi$ cannot be
a rational function.
\end{example}

Our next examples use the sum of divisors function~$\sigma(n)=\sum_{d\vert n}d$.
There are sophisticated
bounds for size of~$\sigma(n)$, but for our purposes
it is sufficient to note the trivial bounds
\[
n\le\sigma(n)\le n^2.
\]
It follows that
the complex power series
\[
\theta(z)=\exp\sum_{n\ge1}\sigma(n)\frac{z^n}{n}
\]
has radius of convergence~$1$.
It is known that
\[
\frac{1}{\theta(z)}=
1-z-z^2+z^5+z^7-\cdots,
\]
where the powers of~$z$ are those
of the form~$(3k^2\pm k)/2$
(see, for example, P{\'o}lya and Szeg{\"o}~\cite[Sec.~VIII, Ex.~75]{MR0170986}).
This means that~$\frac{1}{\theta(z)}$ is a power
series with arbitrarily long consecutive sequences
of zero coefficients. Thus, by the Skolem--Mahler--Lech Theorem, the coefficients of~$\frac{1}{\theta(z)}$ are not a linear recurrence sequence and we deduce that~$\theta(z)$ is not a rational function of~$z$.

\begin{example}\label{irrationalwithrationalquotient}
In order to use the irrationality of~$\theta(z)$, we define~$b_n^g=1$ and~$b_n^h=0$, for all~$n\ge 1$, and we choose~$b_n^s$ later. Now we define~$b_n,a_n$ as in Lemma~\ref{basiclemma} and denote by~$(X,T)$,~$(\overline{X},\overline{T})$ the pair of systems given there. Thus~$T$ has one extra orbit in each length, compared to~$T$, and the action of~$\imath$ on~$X$ has the effect of gluing together
exactly one pair of orbits of each length.

Now we first take~$b_n^s=\frac1n\sum_{d\vert n}\mu\(\frac{n}{d}\)2^d-1$, for~$n\ge 1$, so that~$b_n$ is the number of orbits of length~$n$ in any system with~$2^n$ points of period~$n$. Then
\[
\zeta_{\overline{T}}(z)=\frac{1}{1-2z},
\]
while
\[
\fixnumber_T(n)=\sum_{d\vert n}da_d=2^n+\sigma(n),
\]
so that
\[
\zeta_T(z)=\frac{1}{1-2z}\theta(z).
\]
By the remarks above, this is not a rational function.

In the reverse direction, we take~$b_1^s=1$ and~$b_n^s=\frac1n\sum_{d\vert n}\mu\(\frac{n}{d}\)2^d-2$, for~$n\ge 2$, so that~$a_n$ is the number of orbits of length~$n$ in any system with~$2^n+1$ points of period~$n$. Then
\[
\zeta_{T}(z)=\frac{1}{(1-z)(1-2z)},
\]
while
\[
\fixnumber_{\overline{T}}(n)=\sum_{d\vert n}db_d=2^n+1-\sigma(n),
\]
so that
\[
\zeta_{\overline{T}}(z)=\frac{1}{(1-z)(1-2z)\theta(z)},
\]
which is again irrational.
\end{example}

In fact, the zeta function in the previous example is
worse than irrational.
The function~$1/\theta(z)$ has integer coefficients and radius of
convergence~$1$, but is not rational. Thus,
by the P{\'o}lya--Carlson Theorem (see~\cite{carlson, polya})
it has the unit circle as natural boundary, and the function~$\zeta_{\overline{T}}$ also has natural boundary here. Since the radius of convergence of~$\zeta_{\overline{T}}$ is only~$\frac 12$ this is perhaps not so interesting, but our final example shows that it is possible for the circle of convergence and the natural boundary of~$\zeta_{\overline{T}}$ to coincide, even when~$\zeta_T$ is rational.

\begin{example}\label{naturalboundary}
We begin by recursively defining an auxiliary sequence~$(c_n)$ of non-negative integers by the following conditions:
\begin{itemize}
\item if~$n=1$ or~$n$ is prime, then~$c_n=0$;
\item if~$n$ is composite then, for any prime~$p$ dividing~$n$,
\[
c_n \equiv c_{n/p} \pmod{p^{\ord_p(n)}},
\]
and~$n\le c_n< 2n$.
\end{itemize}
Note that, by the Chinese Remainder Theorem, these conditions determine~$(c_n)$ uniquely.
Now set
\[
a_n=\tfrac 1n \sum_{d\vert n}\mu\(\tfrac nd\)2^{d},\quad
b_n=a_n+\tfrac 1n \sum_{d\vert n}\mu\(\tfrac nd\)c_d2^d.
\]
We assume for now that the sequences~$(a_n),(b_n)$ are non-negative integers satisfying the conditions of Proposition~\ref{prop:existence} and denote by~$(X,T),(\overline{X},\overline{T})$ the systems given there with these numbers of orbits. Then
\[
\zeta_T(z)=\frac 1{1-2z},
\]
while
\[
z\frac{\zeta_{\overline{T}}'(z)}{\zeta_{\overline{T}}(z)}
= \frac{2z}{1-2z} + \sum_{n=1}^\infty c_n 2^nz^n.
\]
Now the integer sequence~$(c_n)$ is not a linear recurrence sequence, since it is zero for all primes and non-zero for all composites, while the bound~$c_n<2n$ implies that the power series
\[
\sum_{n\ge 1}c_nz^n
\]
has radius of convergence~$1$. Hence it has a natural boundary on the unit circle, and we deduce that~$z\frac{\zeta_{\overline{T}}'(z)}{\zeta_{\overline{T}}(z)}$ has a natural boundary on the circle~$|z|=\frac 12$. Thus~$\zeta_{\overline{T}}$ also has a natural boundary here, since it has radius of convergence~$\frac 12$.

It remains to show that~$(a_n),(b_n)$ are non-negative integers and satisfy the conditions of Proposition~\ref{prop:existence}. First, note that~$a_n$ is the number of closed orbits of period~$n$ of the tent map so that~$(a_n)$ is a sequence of non-negative integers. Now we prove that the sequence~$(b_n-a_n)$ is also a sequence of non-negative integers. First we must check that
\[
\sum_{d\vert n}\mu\(\tfrac nd\)c_d2^d
\]
is a non-negative integer divisible by~$n$, for all~$n\ge 1$. To show that
it is divisible by~$n$, we show that it is divisible by~$p^{\ord_p(n)}$, for
each prime~$p$ dividing~$n$. For this, we use the following version of Euler's
generalization of Fermat's Little Theorem: for any prime~$p$ and
integers~$r,m$, with~$p\vert m$, we have
\[
r^{m} \equiv r^{m/p} \pmod{p^{\ord_p(m)}}.
\]
Thus, for any prime~$p$ dividing~$n$,
\[
\sum_{d\vert n}\mu\(\tfrac nd\)c_d2^d =
\sum\limits_{\substack{d\vert n \\ p\nvert \frac nd}}\mu\(\tfrac nd\)\(c_d2^d-c_{d/p}2^{d/p}\)
\equiv 0 \pmod{p^{\ord_p(n)}},
\]
since, for any~$d$ dividing~$n$ with~$p\nvert \frac nd$, we
have~$\ord_p(d)=\ord_p(n)$ and~$c_d\equiv c_{d/p} \pmod{p^{\ord_p(d)}}$ by construction.

For non-negativity, when~$n$ is~$1$ or prime we have~$b_n-a_n=0$. On the other hand, for~$n$ composite, the bounds~$c_n\ge n$ and~$c_d<2d\le n$, for~$d$ a divisor of~$n$,
imply that
\[
\sum_{d\vert n}\mu\(\tfrac nd\)c_d2^d > n2^n-\sum_{d\le \frac n2}n2^d
> n(2^n-2^{\frac n2+1}) > 0.
\]
Finally, for~$n$ composite, the same bounds show that
\[
b_n - a_n =\frac 1n\sum_{d\vert n}\mu\(\tfrac nd\)c_d2^d < (2^n+2^{\frac n2+1})<2^{n+1},
\]
and, similarly,
\[
\frac 1n (2^n - 2^{\frac n2+1}) < a_n < \frac 1n (2^n + 2^{\frac n2+1}) < \frac 1n 2^{n+1}.
\]
Thus, for~$n\ge 6$, using that~$2^{2n}> (2n+3)2^{n+1}$, we have
\[
a_{2n} > \frac 1{2n}(2^{2n} - 2^{n+1}) > \(\frac{n+1}{n}\) 2^{n+1} > (b_n-a_n)+a_n = b_n.
\]
Finally, one checks that~$b_4=19<30=a_8$ and, since~$b_n=a_n<a_{2n}$ for~$n$ prime, the conditions of Proposition~\ref{prop:existence} are satisfied with~$N=2$.
\end{example}

\section{Concluding remarks and questions}

\begin{enumerate}
\item The simple observation in~\cite{MR1873399} that
for any sequence~$(a_n)$ of non-negative integers
there is a topological dynamical system~$(X,T)$
with~$\orbitnumber_T(n)=a_n$ for all~$n\ge1$
was extended by Windsor~\cite{MR2422026}, who showed that
the map may be required to be an infinitely differentiable map on the~$2$-torus.
Does Lemma~\ref{basiclemma}
also have a smooth version, in which both maps and the involution
are differentiable maps on a manifold?
\item We have only considered quotients by
an action of the group~$C_2$. The same process
makes sense if~$(X,T)$ supports an action of
some finite group~$G$ commuting with~$T$, and similar
questions arise. In this setting the structure of the
group plays a larger role, and other
complications arise; this is explored in~\cite{stefi}, where generalizations
of Corollaries~\ref{cor:expbounds} and~\ref{theorem} are obtained.
A particularly interesting sample problem is to understand
a version of the relation~\eqref{coolquestion} for other groups
of symmetries.
\item Achieving radius of convergence strictly smaller
than~$1$ in
Example~\ref{naturalboundary} is important because
with radius of convergence~$1$ the rational part of the
P{\'o}lya--Carlson dichotomy is not particularly
interesting: a rational Taylor series with integer
coefficients and radius of convergence~$1$ has the form~$\frac{p(z)}{(1-z^a)^b}$
for some polynomial~$p$ with integer coefficients and integers~$a,b\ge0$. In our
settings, this would correspond to dynamical systems in which the
number of closed orbits of length~$n$ is constant for large~$n$.
\end{enumerate}


\providecommand{\bysame}{\leavevmode\hbox to3em{\hrulefill}\thinspace}
\providecommand{\MR}{\relax\ifhmode\unskip\space\fi MR }
\providecommand{\MRhref}[2]{%
  \href{http://www.ams.org/mathscinet-getitem?mr=#1}{#2}
}
\providecommand{\href}[2]{#2}

\end{document}